\documentclass [a4paper, 12t]{article}

\usepackage{amsmath}
\usepackage{amsfonts}
\usepackage{amssymb}
\usepackage{amsthm}
\usepackage{makeidx}
\usepackage{graphicx}
\usepackage{pb-diagram}
\usepackage{epstopdf}
\usepackage{fancyhdr}
\usepackage{enumitem}
\usepackage[all]{xy}

\newtheorem{teo}{Theorem}
\newtheorem{prop}[teo]{Proposition} 

\newtheorem{cor}[teo]{Corollary}
\theoremstyle{definition}

\newtheorem{nota}[teo]{Note}

\newtheorem{con*}{Conjecture}

\begin{document}
\setlength{\parskip}{1ex plus 0.5ex minus 0.2ex}
\begin{center}
\textbf{ON A CLASS OF C*-PREDUALS OF $l_1$}\\
\end{center}
\begin{center}
\textsl{By STEFANO ROSSI}
\end{center}
$$$$\\
$$$$
\textbf{Abstract}\quad
As it is well known, the Banach space $l_1$ of absolutely summable (complex) sequences endowed with the $\|\cdot\|_1$ norm is not \emph{unique predual}. This means that there are many different (\emph{i.e.} non isometrically isomorphic) Banach spaces $X$ such that $X^*\cong l_1$.\\
The present note is aimed to point out a simple class of C*- preduals of $l_1$: namely the spaces $C_{\tau}(\mathbb{N})$ of continuous functions $f:\mathbb{N}\rightarrow\mathbb{C}$, where the set of natural numbers $\mathbb{N}$ is equipped with a compact Hausdorff topology $\mathcal{T}$.\\
To be more concrete, we shall explicitly describe a countable collection $\{\mathcal{T}_n\}$ of such topologies.\\
Finally, we also provide an abstract characterization of the previous preduals as closed subspaces $M\subset l^{\infty}$ rich of positive elements.

$$$$
As commonly used in the literature, we shall denote by $l_1$ the (complex) Banach space of absolutely summable sequences, given of the norm $\|\cdot\|_1$  defined by $\|a\|_1\doteq\sum_{i=1}^{\infty}|a_i|$ for each $a\in l_1$.\\
It is a very well known fact that $l_1$ is a conjugate Banach space, that is there exists at least a Banach space $X$, such that $X^*\cong l_1$ (isometric isomorphism). Such a space is usually named a \emph{predual}. The most famous predual of $l_1$ is probably represented by the space $c_0$ of those (complex) sequences converging to $0$, endowed of the \emph{sup}-norm. In this case, the isometric isomorphism $c_0^*\cong l_1$ is the map $\Psi: l_1\rightarrow c_0^*$ given by $\langle\Psi(y), x\rangle\doteq\sum_{i=1}^{\infty}y_ix_i$ for every $x\in c_0$ and $y\in l_1$.\\
In spite of its simple definition, $l_1$ is a rather pathological\footnote{The weak topology of $l_1$ is not well behaved: every weakly convergent sequence is indeed norm-convergent, although the weak topology is strictly weaker than the norm topology. } Banach space: for instance the predual is not unique; there is in fact a plenty of (non isomorphic) preduals of $l_1$. Some of these are quite "irregular": Y. Benyamini and J. Lindenstrauss \cite{Linde} proved in 1972 that there is a predual of $l_1$ that is not (topologically) complemented in any $C(K)$-space, $K$ being any compact Hausdorff topological space.\\
On the other hand, the present paper is aimed to discuss a very nice class of C*-preduals of $l_1$. In this spirit, the first thing that should be noticed is the following:

\begin{prop}
\label{prop1}
If $\mathcal{T}$ is a compact Hausdorff topology on the set of natural numbers $\mathbb{N}$, one has $C_{\tau}(\mathbb{N})^*\cong l_1$.
\end{prop}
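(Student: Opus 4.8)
The plan is to reduce the statement to the Riesz--Markov--Kakutani representation theorem together with the elementary fact that every Borel measure on a countable Hausdorff space is purely atomic.

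First, since $(\mathbb{N},\mathcal{T})$ is compact Hausdorff, the Riesz representation theorem identifies $C_{\tau}(\mathbb{N})^{*}$ isometrically with the Banach space $M(\mathbb{N},\mathcal{T})$ of complex regular Borel measures on $(\mathbb{N},\mathcal{T})$, normed by total variation, via $\mu\mapsto\bigl(f\mapsto\int f\,d\mu\bigr)$. Hence it suffices to produce an isometric isomorphism $M(\mathbb{N},\mathcal{T})\cong l_{1}$. The decisive point is that, $\mathcal{T}$ being Hausdorff, every singleton $\{n\}$ is closed, hence Borel, and $\mathbb{N}=\bigsqcup_{n}\{n\}$ is a countable partition into Borel sets; consequently countable additivity of $\mu$ and of $|\mu|$ forces $\mu(A)=\sum_{n\in A}\mu(\{n\})$ for every Borel set $A$, together with $\|\mu\|=|\mu|(\mathbb{N})=\sum_{n}|\mu|(\{n\})=\sum_{n}|\mu(\{n\})|<\infty$. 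Thus $\mu\mapsto(\mu(\{n\}))_{n}$ maps $M(\mathbb{N},\mathcal{T})$ linearly and isometrically into $l_{1}$.

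For surjectivity I would send $y\in l_{1}$ to the atomic measure $\mu_{y}\doteq\sum_{n}y_{n}\delta_{n}$, i.e. $\mu_{y}(A)=\sum_{n\in A}y_{n}$; this is manifestly a finite, countably additive complex Borel measure with $|\mu_{y}|(\{n\})=|y_{n}|$, and the two assignments are mutually inverse isometries. The only point requiring a little care is that $\mu_{y}$ be \emph{regular}, so that it genuinely corresponds to a functional on $C_{\tau}(\mathbb{N})$: this is immediate from the observation that every Borel set agrees, up to arbitrarily small $|\mu_{y}|$-mass, with a finite subset of $\mathbb{N}$, which is simultaneously compact and the complement of an open set. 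Combining the two identifications yields $C_{\tau}(\mathbb{N})^{*}\cong M(\mathbb{N},\mathcal{T})\cong l_{1}$.

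A word on where the work actually sits: the statement is easy precisely because a countable Hausdorff space leaves no room for non-atomic behaviour, so the substantive input is exactly that atomicity. If one preferred to bypass the general Riesz theorem, one could verify directly that $\Phi\colon l_{1}\to C_{\tau}(\mathbb{N})^{*}$, $\langle\Phi(y),f\rangle\doteq\sum_{n}y_{n}f(n)$, is a well-defined isometry --- the nontrivial inequality $\|\Phi(y)\|\ge\|y\|_{1}$ for finitely supported $y$ following from normality of the compact Hausdorff space $(\mathbb{N},\mathcal{T})$ and Tietze's extension theorem, which supplies a norm-one continuous function taking prescribed unimodular values on the finite (closed) support of $y$ --- but surjectivity of $\Phi$ would still have to be extracted from the atomicity of measures on $(\mathbb{N},\mathcal{T})$, so little is really gained.
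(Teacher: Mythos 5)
Your proof is correct, but it takes precisely the route the paper mentions and then declines to follow: you invoke the Riesz--Markov representation $C_{\tau}(\mathbb{N})^*\cong M(\mathbb{N},\mathcal{T})$ and then observe that on a countable Hausdorff space every finite Borel measure is purely atomic (singletons are closed, so every subset is Borel and countable additivity forces $\mu(A)=\sum_{n\in A}\mu(\{n\})$), which identifies $M(\mathbb{N},\mathcal{T})$ isometrically with $l_1$; your regularity check for $\mu_y$ via finite sets, which are simultaneously compact and have open complement, is the one small point that genuinely needed saying, and you say it. The paper instead deliberately avoids measure theory: it verifies that $C_{\tau}(\mathbb{N})\subset l^{\infty}$ is a closed, norm-attaining (via Weierstrass, using compactness) and $1$-norming subspace over $l_1$, and then appeals to the characterization of separable conjugate spaces in \cite{Rossi}, so that the isometry is automatically the canonical pairing. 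The $1$-norming step there --- prescribing unimodular values on the finite closed discrete set $\{1,\dots,n\}$ and extending by Tietze to a norm-one function --- is exactly the computation you sketch in your closing paragraph as the ``direct'' alternative; what the characterization theorem buys the paper is precisely the surjectivity that you instead extract from atomicity of measures. Your version is more self-contained relative to the standard literature and makes the dual concretely visible as a space of atomic measures; the paper's version showcases the criterion of \cite{Rossi} and keeps the argument purely Banach-space-theoretic.
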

\begin{proof}
It is possible to prove the statement by using the Riesz-Markov theorem. Here we perform a proof based on the characterization of separable conjugate spaces given in \cite{Rossi}. To this aim, we only have to check that $C_{\tau}(\mathbb{N})\subset l^{\infty}$ is a closed, norm-attaining and $1$-norming subspace.\\
$C_{\tau}(\mathbb{N})$ is closed in $l^{\infty}$ as a complete subspace. It is norm-attaining (when it is thought as subspace of bounded linear functionals on $l_1$ ) thanks to Weierstrass' theorem, since $(\mathbb{N},\mathcal{T})$ is a compact space by assumption.\\ If $y\in l_1$ and $\varepsilon>0$, there is $n\in\mathbb{N}$ such that $\|y\|_1\leq \sum_{i=1}^n|y_i|+\varepsilon$. Let $\theta_i\in\mathbb{R}$ such that $y_i=|y_i|e^{i\theta_i}$ for each $i=1,2\dots,n$.
The subset $C_n\doteq\{1, 2, \dots,n\}\subset\mathbb{N}$ is closed (and discrete), hence the function $f:C_n\rightarrow\mathbb{C}$ given by $f(i)=e^{-i\theta_i}$ for each $i\in C_n$ is continuous and $\|f\|_{\infty}=1$. Since $(\mathbb{N},\mathcal{T})$ is a compact Hausdorff space, it is a normal topological space, so Tietze extension theorem applies to get a function $g\in C_{\tau}(\mathbb{N})$ such that $\|g\|_{\infty}=1$ and $g(i)=e^{-i\theta_i}$ for each $i\in{1, 2,\dots,n}$.\\
We have $|\langle g, y\rangle|=|\sum_{i=1}^{\infty} g(i)y_i|\geq\sum_{i=1}^n |y_i|-\varepsilon\geq \|y\|_1-2\varepsilon$. The last inequality easily implies
that $$\sup_{g\in C_{\tau}(\mathbb{N})_1}|\langle g, y\rangle|=\|y\|_1$$
that is $C_{\tau}(\mathbb{N})\subset l^{\infty}$ is a $1$-norming subspace. This ends the proof.
\end{proof}
The previous proposition immediately leads to the following  corollary in point-set topology:
\begin{cor}
Every compact Hausdorff topology on the set of natural numbers $\mathbb{N}$ is
metrizable.
\end{cor}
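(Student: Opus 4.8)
The plan is to deduce metrizability of $(\mathbb{N},\mathcal{T})$ from the \emph{separability} of the Banach space $C_\tau(\mathbb{N})$, which is itself a consequence of Proposition~\ref{prop1}. First I would invoke Proposition~\ref{prop1} to obtain $C_\tau(\mathbb{N})^*\cong l_1$. Since $l_1$ is separable, and since a Banach space whose (topological) dual is separable is necessarily separable itself, it follows that $C_\tau(\mathbb{N})$ is a separable Banach space. Thus $(\mathbb{N},\mathcal{T})$ is a compact Hausdorff space whose space of continuous functions is separable.

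The second step is to appeal to the classical dictionary between $K$ and $C(K)$ for $K$ compact Hausdorff: $C(K)$ is separable if and only if $K$ is metrizable. Applying the implication we need to $K=(\mathbb{N},\mathcal{T})$ gives at once that $\mathcal{T}$ is metrizable, which is the assertion. For completeness I would recall why separability of $C(K)$ forces $K$ to be metrizable, as this is the only non-routine ingredient. Consider the evaluation map $\delta\colon K\to (B_{C(K)^*},\,w^*)$, $x\mapsto\delta_x$. It is injective because $C(K)$ separates the points of the compact Hausdorff (hence normal) space $K$ by Urysohn's lemma; it is continuous by the very definition of the weak$^*$ topology; and, $K$ being compact and $(B_{C(K)^*},w^*)$ Hausdorff, $\delta$ is a homeomorphism onto its image. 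Now if $\{f_n\}$ is a countable dense subset of the unit ball of $C(K)$, the formula $d(\varphi,\psi)=\sum_{n\geq 1}2^{-n}\,|\langle\varphi-\psi,f_n\rangle|$ defines a metric on $B_{C(K)^*}$ that induces the weak$^*$ topology there. Hence $(B_{C(K)^*},w^*)$ is metrizable, so $K$ embeds homeomorphically into a metrizable space and is itself metrizable.

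The main obstacle — indeed essentially the only point demanding care — is this last implication, ``$C(K)$ separable $\Rightarrow K$ metrizable''; the rest is immediate from Proposition~\ref{prop1} together with the elementary fact that a space with separable dual is separable. One should also double-check the harmless normalizations (that $\delta_x$ lies in the unit ball, using $\|\delta_x\|=1$, and that the series metric $d$ genuinely recovers the weak$^*$ topology on the norm-bounded set $B_{C(K)^*}$ rather than on all of $C(K)^*$), but these are standard.
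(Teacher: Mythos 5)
Your argument is correct and follows the same route as the paper: Proposition~\ref{prop1} gives $C_{\tau}(\mathbb{N})^*\cong l_1$, separability of the dual forces separability of $C_{\tau}(\mathbb{N})$, and separability of $C(K)$ for compact Hausdorff $K$ yields metrizability of $K$. The only difference is that you spell out the standard embedding of $K$ into the metrizable ball $(B_{C(K)^*},w^*)$, which the paper leaves implicit.
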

\begin{proof}
Let $\mathcal{T}$ be such a topology. We have $C_{\tau}(\mathbb{N})^*\cong l_1$, hence $C_{\tau}(\mathbb{N})$ is a separable Banach space, as a predual of the separable Banach space $l_1$, so that $(\mathbb{N},\mathcal{T})$ is metrizable.
\end{proof}
\begin{nota}
As far as I know, a simple proof of the corollary quoted above does not seem available in the general setting of point-set topology, since it is not apparent that a compact Hausdorff topology on $\mathbb{N}$ is automatically second countable.\\
On the other hand, non first countable topologies on $\mathbb{N}$ are known: \emph{Appert} topology, for instance, provides an elegant example of such a space.
For the reader's convenience, we recall here  that Appert's topology on $\mathbb{N}$ is defined as follows: a subset $A\subset\mathbb{N}$ is open if $1\notin A$ or
(when $1\in A$) if $$\lim_{n\to\infty}\frac{N(n,A)}{n}=1$$
where $N(n,A)\doteq \left|\left\{k\in A: k\leq n\right\}\right|.$\footnote{$|X|$ is the cardinality of any set $X$.}
Appert space is Lindel\"{o}f, separable but it is not first countable, since $1$ does not have a countable basis of neighborhoods. For more details, we refer the interested reader to \cite{Steen} or directly to the original paper by Appert \cite{Appert}.
\end{nota}
Here below we shall describe explicitly a countable collection of compact Hausdorff topologies on $\mathbb{N}$. Before introducing the announced topologies, one should mention that every set $X$ can be endowed with a compact Hausdorff topology, by virtue of a straightforward application of the  Axiom of Choice\footnote{The discrete topology $\mathcal{P}(X)$ on $X$ is locally compact and Hausdorff. The Alexandroff compactification $\hat{X}$ of $X$ is compact and Hausdorff; moreover, if $X$ is an infinite set, there is a bijection $\Phi:X\rightarrow\hat{X}$. We can use $\Phi$ to define a compact Hausdorff topology $\mathcal{T}$ on $X$, by requiring a set $U\subset X$ to be open if $\Phi(U)$ is an open subset of $\hat{X}$.}. \\
Now let $n\in \mathbb{N}$ be a fixed natural number. Given any $k\in\{1,2,\dots, n\}$, we define the sets $A_{k,l}\doteq\{k, mn+k: m\geq l\}$. The sets $A_{k,l}$ allow us to define a topology $\mathcal{T}_n$, whose basis $\mathcal{B}_n$ is given by the subset $B\subset\mathbb{N}$ of the form $A_{k,l}$ if $k\in B$ for some $k\in\{1,2,\dots, n\}$, otherwise we do not put any restriction, namely if $\{1,2\dots, n\}\cap B=\emptyset$ then $B$ is allowed to be any subset of the natural numbers.\\
Since $A_{k,l}\cap A_{k,h}=A_{k,l\vee h}$\footnote{Here $l\vee h$ stands for $\max\{l,h\}$.} and
$A_{k,l}\cap A_{k',h}=\emptyset$ when $k,k'\in\{1,2,\dots, n\}$ are different, $\mathcal{B}_n$ is really a basis.
It is a straightforward verification to check that $\mathcal{T}_n$ is a compact Hausdorff topology; the notion of convergence inherited by this topology  is clearly the following:\\
a sequence $\{n_m: m\in\mathbb{N}\}$ of integers converges to $k\in\{1,2,\dots, n\}$ iff $n_m$ is eventually in a set $A_{k,l}$, while converges to $k>n$ iff it is eventually equal to $k$.\\
In the topology $\mathcal{T}_n$ the set $\{k: k\leq n\}$ is composed by non isolated points, while all the integers $k>n$ are isolated. In some sense, topologies $\mathcal{T}_n$ are as best as possible among compact Hausdorff ones, since it is a straightforward application of \emph{Baire} category theorem that a compact Hausdorff topology on $\mathbb{N}$ cannot have an infinite set of accumulation points\footnote{Whenever $\mathcal{T}$ is a compact Hausdorff topology on $\mathbb{N}$, $(\mathbb{N},\mathcal{T})$ is a Baire space as a complete metric space, hence it cannot be written as a countable union of rare sets, but every  non isolated point $n\in\mathbb{N}$ gives a rare singleton $\{n\}$. In particular, the set of natural numbers $\mathbb{N}$ cannot be given of a connected compact Hausdorff topology; anyway a connected Hausdorff topology on $\mathbb{N}$ is available: for instance\emph{Golomb} topology, see \cite{Golomb}. }.\\
However, what is more important here is that a simple argument can be performed to prove that the topologies $\mathcal{T}_n$ are not homeomorphic:
\begin{prop}
With the notations above, if $n\neq m$ the topological spaces $(\mathbb{N},\mathcal{T}_n)$ and $(\mathbb{N},\mathcal{T}_m)$ are not homeomorphic.
\end{prop}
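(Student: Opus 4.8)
The plan is to distinguish the spaces $(\mathbb{N},\mathcal{T}_n)$ by a homeomorphism invariant that detects the number $n$. From the description of convergence given above, the non-isolated points of $(\mathbb{N},\mathcal{T}_n)$ are precisely the elements of $\{1,2,\dots,n\}$, while every integer $k>n$ is isolated. Hence the cardinality of the set of accumulation points of $(\mathbb{N},\mathcal{T}_n)$ equals $n$. Since "being an isolated point" is preserved under homeomorphism, a homeomorphism $(\mathbb{N},\mathcal{T}_n)\to(\mathbb{N},\mathcal{T}_m)$ would restrict to a bijection between the respective sets of accumulation points, forcing $n=m$.

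So the key steps, in order, are as follows. First I would verify carefully that in $\mathcal{T}_n$ the point $k\in\{1,2,\dots,n\}$ is not isolated: indeed every basic neighborhood of $k$ has the form $A_{k,l}=\{k,\,mn+k : m\geq l\}$, which is infinite, so $\{k\}$ is not open. Second, I would check that each $k>n$ is isolated: writing $k=qn+r$ with $1\le r\le n$, the singleton $\{k\}$ is obtained as the basic set $A_{r,q}\cap A_{r,q+1}^{c}$-type intersection — more directly, any set $B$ with $\{1,\dots,n\}\cap B=\emptyset$ is open, so $\{k\}$ itself is open. Third, I would invoke the standard fact that a homeomorphism carries isolated points to isolated points and hence accumulation points to accumulation points bijectively, so the cardinalities of the accumulation sets must agree; since these cardinalities are $n$ and $m$ respectively, $n\neq m$ makes a homeomorphism impossible.

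I do not expect any serious obstacle here; the only point requiring a small amount of care is the identification of the isolated points, which rests entirely on the explicit form of the basis $\mathcal{B}_n$ and the convergence criterion already recorded in the text. Everything else is the elementary topological observation that the derived set (set of accumulation points) is a topological invariant, so its cardinality separates the spaces $(\mathbb{N},\mathcal{T}_n)$.
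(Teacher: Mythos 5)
Your proposal is correct and rests on the same invariant as the paper's own argument: the non-isolated points of $(\mathbb{N},\mathcal{T}_n)$ are exactly $\{1,\dots,n\}$, so a homeomorphism would force $n=m$ by counting them (the paper phrases this via convergent non-constant sequences and a continuous injection, but the substance is identical). Your parenthetical about an $A_{r,q}\cap A_{r,q+1}^{c}$-type intersection is unnecessary — as you then note, $\{k\}$ for $k>n$ is itself a basic open set — but this does not affect the proof.
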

\begin{proof}
Let us suppose that $m>n$ and let $\Phi:(\mathbb{N},\mathcal{T}_m)\rightarrow (\mathbb{N},\mathcal{T}_n)$ be a continuous injective map.
If $k\in\{1,2,,\dots m\}$, we can consider a sequence $\{n_l\}$ converging to $k$.
The sequence $\{\Phi(n_l)\}$ converges to $\Phi(k)$ thanks to the continuity of $\Phi$. Since $\{n_l\}$ is not constant and $\Phi$ is an injection
$\Phi(k)$ is forced to be a natural number belonging to the subset $\{1,2,\dots,n\}$, against the injectivity of $\Phi$.
\end{proof}
Let us denote by $X_n$ the Banach space $C_{\tau_n}(\mathbb{N})$. Clearly we have $X_n^*\cong l_1$ and
\begin{prop}
If $n\neq m$ the Banach space $X_n$ and $X_m$ are $l_1$-preduals, which are not isometrically isomorphic.
\end{prop}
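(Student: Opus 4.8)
The strategy is to reduce the claim to the classical Banach--Stone theorem. That $X_n$ and $X_m$ are $l_1$-preduals is nothing but Proposition~\ref{prop1} applied to the topologies $\mathcal{T}_n$ and $\mathcal{T}_m$, so the only real content is the failure of isometric isomorphism, and for this the homeomorphism type of the underlying compact space is exactly the invariant we can exploit.

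First I would simply observe that $X_n=C_{\tau_n}(\mathbb{N})$ is, by definition, the space $C(K_n)$ of continuous complex-valued functions on the compact Hausdorff space $K_n\doteq(\mathbb{N},\mathcal{T}_n)$, endowed with the supremum norm. The Banach--Stone theorem, in the form valid for complex scalars, states that every surjective linear isometry $T\colon C(K_m)\to C(K_n)$ is a weighted composition operator, $(Tf)(x)=u(x)\,f(\varphi(x))$ for some unimodular $u\in C(K_n)$ and some homeomorphism $\varphi\colon K_n\to K_m$. In particular, if $X_n$ and $X_m$ are isometrically isomorphic then $K_n$ and $K_m$ are homeomorphic.

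Now assume $n\neq m$. If $X_n$ and $X_m$ were isometrically isomorphic, the previous step would produce a homeomorphism between $(\mathbb{N},\mathcal{T}_n)$ and $(\mathbb{N},\mathcal{T}_m)$, contradicting the Proposition established just above. Hence no such isometric isomorphism can exist, which is what had to be proved.

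I do not expect a genuine obstacle here: the argument is a direct application of Banach--Stone once $X_n$ is recognised as a $C(K)$-space, and the two points worth stressing are merely formal — one must invoke the complex-scalar version of the theorem, and the conclusion is intrinsically about \emph{isometric} equivalence. Indeed the spaces $X_n$ are pairwise linearly \emph{isomorphic} (each $K_n$ being a countable compact space whose derived set $K_n'$ is finite, so that $X_n\cong c_0$ by the Bessaga--Pe\l czy\'nski classification of $C(K)$ over countable compacta), so any argument relying only on the linear-topological structure is bound to fail.
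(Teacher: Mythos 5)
Your proposal is correct and follows essentially the same route as the paper: both reduce the claim to the Banach--Stone theorem, deducing that an isometric isomorphism $X_n\cong X_m$ would force a homeomorphism between $(\mathbb{N},\mathcal{T}_n)$ and $(\mathbb{N},\mathcal{T}_m)$, contradicting the preceding proposition. Your added remarks (the complex-scalar form of Banach--Stone and the observation via Bessaga--Pe\l czy\'nski that the spaces are all linearly isomorphic to $c_0$, so only the isometric structure can distinguish them) are accurate and clarifying, but not a different argument.
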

\begin{proof}
If they were isometrically isomorphic, the topological space $(\mathbb{N},\mathcal{T}_n)$ and $(\mathbb{N},\mathcal{T}_m)$ should be homeomorphic according to the classical Banach-Stone theorem.
\end{proof}

The remaining part of the present paper is devoted to provide an intrinsic characterization of the spaces $C_{\tau}(\mathbb{N})$ as suitable subspaces of
$l^{\infty}$. To this aim, one probably has to remind that any predual $M$ of a conjugate spaces $X$ should be sought as a closed subspace of the dual space $X^*$, which is $1$-\emph{norming}\footnote{A subspace $M\subset X^*$ is said to be $1$-norming if for each $x\in X$, one has $$\|x\|=\sup\{|\varphi(x)|: \varphi\in M_1\}$$ $M_1$ being the unit ball of $M$.} and \emph{norm-attaining}, namely each linear functionals belonging to the subspace is required to attain its norm on the unit ball of $X$. \\
When $X$ is a separable conjugate space, the conditions above are also sufficient for a closed subspace $M\subset X^*$ to be canonically a predual of $X$ as it is shown in \cite{Rossi}.\\
Here canonically means that the isometric isomorphism $X\cong M^*$ is nothing but the restriction of the canonical injection $j:X\rightarrow X^{**}$ to $M$.\\
Before stating the result announced, let us fix some notations: $e\in l^{\infty}$ is the sequence constantly equal to $1$, $M_+$ stands for the positive\footnote{An element $x\in l^{\infty}$ is said to be positive if $x_i\geq 0$ for each $i\in \mathbb{N}$; in this case one writes $x\geq 0$.} cone of a subspace $M\subset l^{\infty}$, while $a^{\frac{1}{2}}$ is the square root\footnote{If $x\geq 0$, then $x^{\frac{1}{2}}$ is the positive sequence given by $x^{\frac{1}{2}}(i)\doteq x_i^{\frac{1}{2}}$ for each $i\in\mathbb{N}$.} of a positive element $a\in l^{\infty}_+$.\\
According to the next theorem the spaces $C_{\tau}(\mathbb{N})$ are precisely those $l_1$-predual rich of positive elements:
$$$$
\begin{teo}
\medskip
Let $M\subset l^{\infty}$ be a predual of $l_1$, such that:\\
$(a)$ $e\in M$.\\
$(b)$ $M_+$ is weakly*-dense in $l^{\infty}_+$.\\
\medskip $(c)$ If $x\in M_+$, then $x^{\frac{1}{2}}\in M_+$.\\
Then $M=C_{\tau}(\mathbb{N})$ for a suitable compact Hausdorff topology on the set of natural numbers $\mathbb{N}$.
\end{teo}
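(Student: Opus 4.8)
The plan is to show that $(a)$, $(b)$, $(c)$ force $M$ to be a unital commutative C*-subalgebra of $l^\infty=C(\beta\mathbb N)$, and then to pin down its spectrum using that $M$ is a \emph{separable}, $1$-norming, norm-attaining subspace.

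\textbf{Step 1 (algebra structure; the crux).} I would first show $M_{\mathrm{sa}}:=M\cap l^\infty_{\mathbb R}$ is a norm-closed real subalgebra of $l^\infty_{\mathbb R}$ containing $e$; the only delicate point is closure under multiplication, and this is exactly where $(c)$ is used. Fix $y\in M_+$. For real $a$ with $|a|<\|y\|^{-1}$ one has $e-ay\in M_+$, hence $g(a):=(e-ay)^{1/2}\in M_+\subseteq M$ by $(c)$. Coordinatewise, $a\mapsto (1-ay_i)^{1/2}$ has all derivatives bounded uniformly in $i$ near $a=0$, so $g$ is a $C^\infty$ curve into $l^\infty$ taking values in the closed subspace $M$; therefore each $g^{(k)}(0)\in M$, and since $g^{(k)}(0)=\binom{1/2}{k}\,k!\,(-1)^k y^k$ with nonzero coefficient, $y^k\in M$ for all $k$. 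Polarisation ($yz=\tfrac12[(y+z)^2-y^2-z^2]$ with $y+z\in M_+$) yields $M_+\cdot M_+\subseteq M$, and writing a real element of $M$ as (element of $M_+$) minus (multiple of $e$) shows $M_{\mathrm{sa}}$ is an algebra. Hence $N:=M_{\mathrm{sa}}+iM_{\mathrm{sa}}$ is a unital commutative C*-subalgebra of $l^\infty$ with $N\subseteq M$. I expect this step — manufacturing squares out of square roots — to be the main obstacle (note that merely taking closed spans of the dyadic roots $y^{1/2^k}$ is \emph{not} enough, by Müntz-type considerations; one really needs the whole curve $g(a)$).

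\textbf{Step 2 ($M$ is self-adjoint).} By $(b)$, $M_+$ is weak*-dense in $l^\infty_+$; since $N\supseteq M_+$ and $N$ is a complex subspace, its weak*-closure is a weak*-closed subspace containing $l^\infty_+$, hence equal to $l^\infty$. Thus the annihilator of $N$ inside $M^*=l_1$ is $\{0\}$ (a $y\in l_1$ killing $N$ would, by weak*-continuity, kill all of $l^\infty$). A norm-closed subspace with zero annihilator in the dual is everything, so $N=M$; in particular $M$ itself is a unital commutative C*-subalgebra of $l^\infty$.

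\textbf{Step 3 (the spectrum is $\mathbb N$).} Gelfand duality gives $M\cong C(Z)$ for a compact Hausdorff $Z$, metrizable because $M$, being a predual of the separable space $l_1$, is separable. The canonical continuous surjection $q:\beta\mathbb N\to Z$ sends $n\in\mathbb N$ to the character $x\mapsto x_n$; it is injective on $\mathbb N$ since $M_+$ separates points of $\mathbb N$ (approximate $e_n\in l^\infty_+$ weak* by elements of $M_+$, using $(b)$), and $q(\mathbb N)$ is dense in $Z$. If there were $z_0\in Z\setminus q(\mathbb N)$, take a compatible metric $d$ on $Z$ and let $x\in M$ correspond to $z\mapsto 1-\min(1,d(z,z_0))\in C(Z)$: then $x_n<1$ for every $n$ while $\sup_n x_n=1$ by density of $q(\mathbb N)$, so $x$ — regarded as a functional on $l_1$ — does not attain its norm on $B_{l_1}$, contradicting that $M$ is norm-attaining. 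Hence $q|_{\mathbb N}$ is a bijection. Transporting the topology of $Z$ to $\mathbb N$ along $q^{-1}$ gives a compact Hausdorff topology $\mathcal T$ on $\mathbb N$, and since the Gelfand isomorphism satisfies $(\Gamma x)(q(n))=x_n$, it is simply the inclusion once sequences and functions on $Z$ are identified; therefore $M=C_{\mathcal T}(\mathbb N)$, which completes the proof.
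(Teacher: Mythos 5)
Your proof is correct, but it follows a genuinely different route from the paper's. The paper never shows that $M$ is closed under multiplication: it passes to the unital C*-algebra $\mathfrak{A}$ generated by $M$, takes an arbitrary character $\omega$ of $\mathfrak{A}$, writes $\omega\upharpoonright_M=\varphi_y$ for some $y\in l_1$, and combines $(b)$, $(c)$ and the multiplicativity of $\omega$ to obtain the identity $\varphi_y(a)=\varphi_y(a^{\frac{1}{2}})^2$ for every $a\in l^{\infty}_+$; taking $a=e_i$ forces $y=e_k$, so every character of $\mathfrak{A}$ is a point evaluation, whence $\sigma(\mathfrak{A})\cong\mathbb{N}$, $\mathfrak{A}=C_{\tau}(\mathbb{N})$, and finally $M=\mathfrak{A}$ because one canonical predual of $l_1$ cannot properly contain another. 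Your Step 1 --- differentiating the curve $a\mapsto(e-ay)^{\frac{1}{2}}$ to extract all powers $y^k$ and then polarising --- is the genuinely new ingredient: it upgrades hypothesis $(c)$ to the assertion that $M$ is already a C*-subalgebra, a fact the paper neither proves nor needs. Your Step 2 is essentially the Hahn--Banach argument the paper compresses into ``no proper inclusion relationships are allowed between preduals'', and your Step 3 replaces the paper's character computation by a direct identification of $\sigma(M)$ with $\mathbb{N}$, in which the norm-attaining property (part of the canonical-predual framework both proofs presuppose) is what rules out extra points in the spectrum. What your route buys: the algebra structure of $M$ is obtained constructively, and you use the weak*-density in $(b)$ only through annihilators and single neighbourhoods, whereas the paper's computation extracts a weak*-convergent \emph{sequence} $x_n\rightharpoonup a$ from mere weak*-density of the convex set $M_+$, a step that strictly speaking deserves a word of justification. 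What the paper's route buys: it is shorter, and it bypasses the analytic work of your Step 1 entirely by letting the multiplicativity of characters of $\mathfrak{A}$ do the algebra for it.
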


\begin{proof}
Let be $\mathfrak{A}\subset l^{\infty}$ be the unital C*-algebra\footnote{For a basic treatment of $C^*$-algebras theory, we refer the reader to \cite{Davidson}.} generated by $M$. If $\omega$ is a \emph{pure} (multiplicative) state on $\mathfrak{A}$, we can consider its restriction $\omega\upharpoonright_M$. Since $M^*\cong l_1$, we have $\omega(x)=\varphi_y(x)\doteq \sum_i y_ix_i$ for each $x\in M$, where $y$ is a suitable sequence in $l_1$. Now pick a positive element $a\in l_{\infty}$. Thanks to $(b)$, there is a sequence $\{x_n\}_{n\in\mathbb{N}}\subset M_+$ such that $x_n\rightharpoonup a$ (in the weak* topology of $l^{\infty}$). Then we have
\begin{eqnarray}
\varphi_y(a)=\lim_n\varphi_y(x_n)=\lim_n\varphi\left(x_n^{\frac{1}{2}}x_n^{\frac{1}{2}}\right)=\nonumber\\
\lim_n\omega\left(x_n^{\frac{1}{2}}x_n^{\frac{1}{2}}\right)=
\lim_n\omega\left(x_n^{\frac{1}{2}}\right)\omega\left(x_n^{\frac{1}{2}}\right)=
\varphi_y(a^{\frac{1}{2}})^2\nonumber\\
\nonumber
\end{eqnarray}
where the last equality holds since $x_n^{\frac{1}{2}}\rightharpoonup a^{\frac{1}{2}}$ (the weak* convergence in $l^{\infty}$ is nothing but the bounded pointwise convergence).\\
If $e_i\in l^{\infty}$ is the sequence given by $e_i(k)=\delta_{i,k}$, we get $\varphi_y(e_i)=\varphi_y(e_i)^2$, because $e_i^{\frac{1}{2}}$ is $e_i$ itself. It follows that, for each $i\in\mathbb{N}$, $\varphi_y(e_i)$ is $0$ or $1$. Since $\sum_i |y_i|= \|\varphi_y\|=1$, one has $y=e_k$ for some $k$. It easily follows that $\omega$ is the evaluation map at $k$.\\
This means that $\sigma(\mathfrak{A})\cong\mathbb{N}$, hence $\mathfrak{A}=C_{\tau}(\mathbb{N})$, $\mathcal{T}$ being the weak* topology on the spectrum of $\mathfrak{A}$.\\
Thanks to proposition \ref{prop1}, we have $C_{\tau}(\mathbb{N})\cong l_1$;
since no proper inclusion relationships are allowed between preduals, we finally get $M=\mathfrak{A}$. This concludes the proof.
\end{proof}

\begin {thebibliography} {8}
\bibitem {Appert} A. Appert, \emph{Propriet\'{e}s des espaces abstraits le plus g\'{e}n\'{e}raux}, Actualit\'{e}s Sci. Indust. No. 146, Hermann, Paris, 1934.
\bibitem {Davidson} K. R. Davidson, \emph{$C^*$-Algebras by Example}, Fields Institute Monographs, American Mathematical Society, 1996.
\bibitem{Golomb} S. W. Golomb, \emph{A connected topology for the integers}, Amer. Math. Montly, \textbf{66}, 663-665, 1959.
\bibitem {Linde} Y. Benyami, J. Lindenstrauss, \emph{A predual of $l_1$ which is not isomorphic to a $C(K)$ space}, Israel Journal of Mathematics \textbf{13}, 246-254, 1972.
\bibitem{Rossi} S. Rossi, \emph{A characterization of separable conjugate spaces}, www.arxiv.org.
\bibitem {Steen}L.A. Steen, J.A. Seebach, \emph{Counterexamples in Topology}, Dover Pubblication, Inc. New York, 1995.
\end{thebibliography}
$$$$
\textsl{DIP. MAT. CASTELNUOVO, UNIV. DI ROMA LA SAPIENZA, ROME, ITALY}\\
\emph{E-mail address:} \verb"s-rossi@mat.uniroma1.it"
\end{document}